\documentclass[a4paper]{amsart}

\usepackage[v2,all]{xy}
\usepackage{amsmath,amsfonts,amssymb,graphicx}

\newtheorem{prop}{Proposition}

\theoremstyle{definition}
\newtheorem{rem}{Remark}

\DeclareMathOperator{\HH}{H}
\DeclareMathOperator{\QH}{QH}
\DeclareMathOperator{\UU}{U}

\DeclareMathOperator{\SO}{SO}
\DeclareMathOperator{\Spin}{Spin}
\DeclareMathOperator{\Sp}{Sp}
\DeclareMathOperator{\Gr}{Gr}
\DeclareMathOperator{\OGr}{OGr}

\DeclareMathOperator{\ch}{ch}
\DeclareMathOperator{\Spec}{Spec}
\newcommand{\CC}{\mathbb C}
\newcommand{\ZZ}{\mathbb Z}
\newcommand{\QQ}{\mathbb Q}
\newcommand{\eps}{\varepsilon}

\begin{document}

\title{Schubert calculus and singularity theory}
\author{Vassily Gorbounov and Victor Petrov}

\begin{abstract}
Schubert calculus has been in the intersection of several fast developing areas of mathematics for a long time.
Originally invented as the description of the cohomology of homogeneous spaces it has to be redesigned when applied to other generalized cohomology theories such as the equivariant, the quantum cohomology, K-theory, and cobordism.
All this cohomology theories are different deformations of the ordinary cohomology. In this note we show that there
is in some sense the universal deformation of Schubert calculus which produces the above mentioned by specialization of the appropriate parameters.
We build on the work of Lerche Vafa and Warner. The main conjecture these authors made was that the classical cohomology of a hermitian symmetric homogeneous manifold is a Jacobi ring of an appropriate potential.
We extend this conjecture and provide a simple proof. Namely we show that the cohomology of the hermitian symmetric space is a Jacobi ring of a certain potential and the equivariant and the quantum cohomology and the K-theory is a Jacobi ring of a particular deformation of this potential.
This suggests to study the most general deformations of the Frobenius algebra of cohomology of these manifolds
by considering the versal deformation of the appropriate potential. The structure of the Jacobi ring of such potential is a subject of well developed singularity theory. This gives a potentially new way to look at the classical, the equivariant, the quantum and other flavors of Schubert calculus.
\end{abstract}

\maketitle

\section{Introduction}
The connection between the cohomology of the homogeneous space $G/H$ and the quantum field theory called the coset model $\bf G/H$ was stated in the influential paper by Lerche Vafa and Warner \cite{LVW}. The key conjectural claim was that the ring $\HH^*(G/H)$ is the Jacobi ring of some function, the so called potential. In \cite{LVW} and subsequent publications this conjecture was supported by a number of non-trivial examples. Later Gepner in \cite{G} has calculated the potential for the cohomology of any Grassmann manifold $\Gr(k,n)$ and discovered that the cohomology of the Grassmann manifolds and the fusion rings of the unitary group are Jacobi ring of the same potential. This work was followed by Witten in \cite{W} where it was pointed out that this connection extends naturally to the quantum cohomology of the Grassmann manifold if one considers the deformed Gepner potential. Also in \cite{FLMW} it was argued that a certain deformation of the potential provides the connection with the affine Toda lattice type field theory. This gave a beautiful link between the representation theory of the loop groups, the quantum Schubert calculus, the integrable systems and the singularity theory.

We take on the task of extending the conjecture of Lerche, Vafa and Warner about the cohomology of hermitian symmetric spaces as defined in \cite{BH} and providing simple proof of it.
We show that the classical cohomology of these homogeneous spaces are Jacobi rings of an appropriate Taylor polynomial, the potential, which we calculate explicitly, and the equivariant cohomology with respect to the torus action, the small quantum cohomology and K-theory are the Jacobi rings of a particular deformation of the potential.  Our approach offers a possibly new way to look on the classical as well as the equivariant and quantum formulas from the Schubert calculus like the Littlewood--Richardson rule from the point of view of singularity theory \cite{AGZV} \cite{S}. It is interesting to explore the connection between these two ways of describing the same algebra. In the case of the Grassmann manifold such an attempt is available in the literature \cite{Ch}.
Further using the description of the equivariant cohomology with respect to the torus action of these homogeneous spaces as a Jacobi ring we give the description of the scheme $\Spec\HH^*_T(G/P)$ in the spirit of the work \cite{GMcPh}. It turns out that the equivariant Euler classes of the tangent bundle at a fixed point of the torus action are the values of the Hessian of our potential at the fixed point. Next we give a description of the small quantum cohomology of these spaces as a scheme and make explicit the Vafa--Intriligator formula by calculating the denominators in it. Namely the Gromov--Witten invariants can be calculated as a sum over the vertices of an appropriate polytope with the explicitly calculated weights. The vertices are identified using Kostant's description of cyclic elements.
As an interesting outcome of this result we give a fast algorithm for calculating Littlewood--Richardson coefficients for the product of Schubert classes.

The connection between the representation theory and singularity theory was the subject of the conjecture by Zuber solved in \cite{VGZ}.

In the modern language of Mirror Symmetry such a presentation of cohomology as a Jacobi ring is called the Landau--Ginzburg model of the homogeneous manifold. This places the study of the deformations of the Frobenius algebra of cohomology of these manifolds into the Saito theory of the Frobenius structure of singularity \cite{S}.
We would like to point out that our potential is a Taylor polynomial and is different from the potential obtained out of toric degeneration of the homogeneous space, like in the approach developed in \cite{EH}, \cite{HV}, \cite{Giv}, \cite{Bat}. The modern treatment of these ideas is given for example in \cite{Pr} where such a potential is called the weak Landau Ginzburg model. The connection of our potential to the weak Landau Ginzburg model and to the quantum cohomology is now being investigated
\cite {GMS}.



The result of \cite{HFT} suggests the relation of the quantum cohomology of homogeneous spaces and the twisted $K$ theory via the Verlinde algebra. It would be interesting to investigate it. Also following the approach of \cite{B} and \cite{GM} one may be able to obtain the Chiral de Rham complex of the homogeneous spaces as the ``Jacobi ring'' of the ``chiralisation'' of the potential. We plan to return to this point in future work.

The authors visited the Max Planck Institute for Mathematics in Bonn while working on this paper. Inspiring discussions with V. Golyshev F. Hirzebruch and Yu. I. Manin are gratefully acknowledged.

\section{Summary of Gepner's work}
Let us remind the Gepner result on the cohomology of the Grassmannian.

The cohomology of the Grassmannian $\Gr(k,n)=\UU(n)/\UU(k)\times\UU(n-k)$ have the following presentation:
let $C=1+c_1+\ldots+c_k+\ldots$ is the full class of the tautological bundle and $\bar C=1+\bar c_1+\ldots+\bar c_{n}+..$, the relation between them is $C\bar C=1$.
For our purposes we need the following description of $\HH^*(\Gr(k,n),\ZZ)$:
$$\HH^*(\Gr(k,n),\ZZ) = \ZZ[c_1,\ldots,c_k]/\bar c_{n-k+1}=\ldots=\bar c_n=0$$

The existence of the potential can be seen using the following easy observation: consider the relations in the cohomology of the Grassmannian
\begin{itemize}
\item $c_1+\bar c_1$
\item $c_2+c_1\bar c_1 +\bar c_2$
\item $c_3+c_2\bar c_1 + c_1\bar c_2 + \bar c_3$
\item $c_4+c_3\bar c_1 + c_2\bar c_2 + c_1\bar c_3 + \bar c_4$
\item $c_5+c_4\bar c_1+c_3\bar c_2 + c_2\bar c_3 + c_1\bar c_4 + \bar c_5$
\end{itemize}

\begin{prop} Consider $\bar c_i$ as a function in $c_1,c_2,\ldots$ defined by the above equation $C\bar C=1$. The following identity holds:
$$\frac{\partial \bar c_i}{\partial c_j}=\frac{\partial \bar c_{i+l}}{\partial c_{j+l}}$$
\end{prop}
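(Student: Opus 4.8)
The plan is to package the $c_i$ and $\bar c_i$ into generating series in an auxiliary formal variable $t$ and then differentiate the defining relation. Concretely, set $C(t)=1+c_1t+c_2t^2+\cdots$ and $\bar C(t)=1+\bar c_1t+\bar c_2t^2+\cdots$, so that the relation $C\bar C=1$ becomes the identity $C(t)\bar C(t)=1$ in $\ZZ[c_1,c_2,\ldots][[t]]$, and $\bar c_i$ is by definition the coefficient of $t^i$ in $\bar C(t)=C(t)^{-1}$, a polynomial in $c_1,\ldots,c_i$.

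First I would differentiate $C(t)\bar C(t)=1$ with respect to $c_j$. Since $\partial C(t)/\partial c_j=t^j$, this gives $t^j\bar C(t)+C(t)\,\partial\bar C(t)/\partial c_j=0$, hence
\[
\frac{\partial\bar C(t)}{\partial c_j}=-t^j\,\frac{\bar C(t)}{C(t)}=-t^j\,\bar C(t)^2,
\]
where in the last step I use $C(t)^{-1}=\bar C(t)$ again. The point is that the right-hand side depends on $j$ only through the explicit factor $t^j$.

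Next I would extract the coefficient of $t^i$ on both sides. On the left this is $\partial\bar c_i/\partial c_j$, and on the right it is $-[t^{\,i-j}]\,\bar C(t)^2$, i.e. minus the coefficient of $t^{i-j}$ in $\bar C(t)^2$ (understood to be $0$ when $i<j$). Writing $\bar C(t)^2=\sum_{m\ge 0}\big(\sum_{a+b=m}\bar c_a\bar c_b\big)t^m$ with $\bar c_0=1$, we obtain the closed formula $\partial\bar c_i/\partial c_j=-\sum_{a+b=i-j}\bar c_a\bar c_b$, which manifestly depends only on the difference $i-j$. Applying this with $(i,j)$ replaced by $(i+l,j+l)$ leaves $i-j$ unchanged, which is exactly the claimed identity $\partial\bar c_i/\partial c_j=\partial\bar c_{i+l}/\partial c_{j+l}$.

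I do not anticipate a serious obstacle here; the only points requiring a little care are the bookkeeping of degrees (so that one is genuinely working with polynomials in finitely many $c_i$ for each fixed $\bar c_i$, the series in $t$ being used only as a formal device) and the justification that $\partial/\partial c_j$ commutes with taking the $t$-adic inverse, which is immediate since inversion is given by a convergent geometric series in the $t$-adic topology. If one prefers to avoid generating functions altogether, the same computation can be carried out directly on the triangular system displayed before the proposition, differentiating the $i$-th relation with respect to $c_j$ and inducting on $i$, but the generating-function argument is cleaner and makes the dependence on $i-j$ transparent.
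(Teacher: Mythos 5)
Your proposal is correct and is essentially the paper's own argument: both differentiate the relation $C\bar C=1$ with respect to $c_j$ to see that $\partial\bar C/\partial c_j$ is $-\bar C/C$ up to a shift depending only on $j$, and then compare graded components, the formal variable $t$ in your version merely making the grading explicit (your extraction of the coefficient of $t^i$ from $-t^j\bar C(t)^2$ is the paper's ``taking the graded components''). The closed formula $\partial\bar c_i/\partial c_j=-\sum_{a+b=i-j}\bar c_a\bar c_b$ is a nice, slightly more explicit bonus, but the route is the same.
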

\begin{proof}
$\bar C$ is a function in $c_1,c_2,\ldots$.
Differentiating the identity $C\bar C=1$ with respect to $c_i$ produces
$$\frac{\partial}{\partial c_i}\bar C=-\frac{\bar C}{C}$$
In other words the function $\bar C$ satisfies the system of linear partial differential equations:
$$\frac{\partial }{\partial c_{i}}\bar C=\frac{\partial }{\partial c_{j}}\bar C$$
Taking the graded components of this identity proves the proposition.
\end{proof}
This implies that the ideal defining the cohomology ring of the Grassmann manifold is a Jacobi ideal.
Figuring out the potential is easy now.
The function $\bar C$ has the form $\frac{1}{C}$. Therefore the generating function for the potentials is
$$V=\log (C)$$
Taking now the graded component of the weight $n+k-1$ and setting $c_i=0$ for $i>k$ we obtain the Landau--Ginzburg potential for $\Gr(k,n)$.
Here is the potential for $\Gr(2,5)$
$$V=-\frac{c_1^6}{6}+c_1^4c_2-\frac{3}{2}c_1^2c_2^2+\frac{c_2^3}{3}$$

It is easy now to obtain the Gepner description of the generating function for the potential:
$$V=\log (1+c_1+c_2+\ldots)=\log \Pi(1+t_i)=\sum \log (1+t_i)=\sum_{i} t_i-\sum_i \frac{t_i^2}{2}+\sum_i\frac{t_i^3}{3}-\ldots$$


\section{The cohomology of the orthogonal Grassmann manifold and the Catalan numbers}

The cohomology of the orthogonal Grassmannian $\OGr(n,2n)=\SO(2n)/\UU(n)$ can be
presented as follows \cite{BoeHiller}. Consider a polynomial ring $\ZZ[x_1,x_2,\ldots,x_{2n-2}]$, $\deg x_i=2i$, and factor out
the set of elements $$P_{i} = x_i^2 + 2\sum_{k=1}
(-1)^k x_{i+k}x_{i-k} + (-1)^j x_{2i}.$$
As usual we set $x_i$ to be zero if the number $i$ is outside of the interval $[1,2n-2]$.
A few of the elements $P_{i}$ are listed below:
\begin{itemize}
\item $x_1^2-x_2$
\item $x_2^2-2x_1x_3+x_4$
\item $x_3^2-2x_2x_4+2x_1x_5-x_6$
\item $x_4^2-2x_3x_5+2x_2x_6-2x_1x_7+x_8$
\item $x_5^2-2x_4x_6+2x_3x_7-2x_2x_8+2x_1x_9-x_{10}$
\end{itemize}
The meaning of the variables $x_i$ is as follows: it is the Chern classes of the tautological bundle divided by 2 \cite{BoeHiller}. Introduce two generating functions $X_+=1+x_2+x_4+\ldots$ and $X_-=x_1+x_3+x_5+\ldots$
Considering the even $x_{2n}$ as functions of the odd $x_{2m+1}$'s the above relations can be written as
$(X_++X_-)(X_+-X_--1)=X_-$
\begin{prop} One has
$$\frac{\partial x_{2(n+k)}}{\partial x_{2(m+k)+1}}=\frac{\partial x_{2n}}{\partial x_{2m+1}}$$
\end{prop}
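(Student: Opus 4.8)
The plan is to transcribe the proof of Proposition 1 to the orthogonal setting. The first step is to record the generating-function form of the defining relations: collecting the polynomials $P_i$ by degree shows that the relations $P_i=0$, $i\ge 1$, are together equivalent to the single identity
\[
X_+^2-X_+-X_-^2=0,\qquad\text{equivalently}\qquad X_+(X_+-1)=X_-^2,
\]
which is the expanded form of the displayed identity between $X_+$ and $X_-$. Here the even $x_{2i}$ are viewed as functions of the free odd variables $x_1,x_3,x_5,\dots$: each $P_i=0$ contains $x_{2i}$ with coefficient $\pm1$ and all of its other variables have index $<2i$, so $x_{2i}$ is eliminated recursively, and everything below takes place in the polynomial ring $\ZZ[x_1,x_3,x_5,\dots]$ (or its graded completion). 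The statement for $\HH^*(\OGr(n,2n))$ then follows by imposing the finitely many relations in play together with the truncation $x_j=0$ for $j>2n-2$. This identity plays the role that $C\bar C=1$ plays in Proposition 1.

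Next I would differentiate this identity with respect to $x_{2m+1}$, just as $C\bar C=1$ was differentiated with respect to $c_i$ in the proof of Proposition 1. Since $\partial X_-/\partial x_{2m+1}=1$ (as formal sums, $X_-$ being linear in the odd variables), one obtains
\[
(2X_+-1)\,\frac{\partial X_+}{\partial x_{2m+1}}=2X_-.
\]
Because $2X_+-1=1+2x_2+2x_4+\cdots$ is invertible in the graded completion, this gives $\partial X_+/\partial x_{2m+1}=2X_-/(2X_+-1)$, and the key point is that the right-hand side is the same for every $m$. In other words $X_+$ satisfies the system of linear partial differential equations $\partial X_+/\partial x_{2m+1}=\partial X_+/\partial x_{2l+1}$, exactly in the spirit of the equations $\partial\bar C/\partial c_i=\partial\bar C/\partial c_j$ of Proposition 1.

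The proof is then finished by taking graded components, as at the end of the proof of Proposition 1. Write $2X_-/(2X_+-1)=\sum_{\ell\ge0}\psi_{2\ell+1}$ with $\psi_{2\ell+1}$ homogeneous of weight $2\ell+1$. Comparing the weight-$(2\ell+1)$ parts of the two sides of $\partial X_+/\partial x_{2m+1}=2X_-/(2X_+-1)$ gives $\partial x_{2(m+\ell+1)}/\partial x_{2m+1}=\psi_{2\ell+1}$; equivalently $\partial x_{2n}/\partial x_{2m+1}=\psi_{2(n-m)-1}$, which depends only on $n-m$. Applying this once with $(n,m)$ and once with $(n+k,m+k)$ yields the asserted equality.

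There is no serious obstacle here, since the argument is a direct transcription of Proposition 1; the only points needing care are of a bookkeeping nature. Two things in particular require attention: verifying that the family $\{P_i=0\}$ really is equivalent to the single generating-function identity above (a routine degree-by-degree check, which one can confirm on the listed cases $P_1=x_1^2-x_2$, $P_2=x_2^2-2x_1x_3+x_4$, \dots), and keeping track of parities and weights when passing from the formal-sum identity $\partial_{x_{2m+1}}X_+=\partial_{x_{2l+1}}X_+$ to its homogeneous pieces, so that the correct unit of index shift, namely $n\mapsto n+k$ together with $m\mapsto m+k$, emerges.
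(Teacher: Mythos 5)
Your proof is correct and takes essentially the same route the paper intends for this proposition: encode the relations $P_i$ in a single generating-function identity (your even-part form $X_+^2-X_+=X_-^2$ is the right reading of the paper's displayed relation, consistent with its later formula $X_+=-\tfrac12+\sqrt{\tfrac14+X_-^2}$), differentiate with respect to the odd variables, and compare homogeneous components exactly as in Proposition~1. The bookkeeping you supply (recursive elimination of the even $x_{2i}$ and invertibility of $2X_+-1$) is sound, so there is nothing to add.
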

This implies the existence of the potential for the ideal generated by the $P_i$'s. Let us calculate it.
Solving for $X_+$ one obtains
$$X_+=-\frac{1}{2}+\sqrt{\frac{1}{4}+X^2_-}$$
Since $$\int  \sqrt{1 + u^2} dx = \frac{u}{2}\big (\sqrt{ 1 + u^2} + \ln (\sqrt{1 + u^2}+u)\big )$$
we obtain the generating function for the potentials
$$V=\frac{1}{2}(-X_-+\big (\sqrt{ 1 + X_-^2} + \ln (\sqrt{1 + X_-^2}+X_-)\big )$$
The component of a given degree $2n-1$ of $V$ gives the potential for the cohomology of $\HH^*(\OGr(n,2n))$.

Here is the Landau--Ginsburg potentials for $\OGr(4,8)$, $\OGr(5,10)$, and $\OGr(6,12)$.
$$V_7=x_1x_3^2-x_1^4x_3+\frac{2}{7}x_1^7$$
$$V_9=-2x_1^3x_3^2+2x_1^6x_3+\frac{1}{3}x_3^3-\frac{5}{9}x_1^9$$
$$V_{11}=2x_1^6x_5+6x_1^5x_3^2-2x_1^2x_3^3+x_1x_5^2+x_3^2x_5-5x_1^8x_3-4x_1^3x_3x_5+\frac{14}{11}x_1^{11}$$

\begin{rem} It is interesting to note that the above relations are related to the Catalan numbers. Namely
the expansion of the function
$$g(u)=-\frac{1}{2}+\sqrt{\frac{1}{4}+u^2_-}$$
is the generating function for the Catalan numbers
$$g(u)=\sum (-1)^nC_{n-1}u^{2n}$$
Indeed setting the odd $x_i$ for $i>1$ in the defining relations equal to zero one obtains Segner's Recurrence Formula for the Catalan number.
\end{rem}

\begin{rem} Since $B_n$ and $C_n$ has the same Weyl group, and their torsion index
is $2$, one has
$$
\HH^*(\Sp(n)/\UU(n),\ZZ\Big[\frac{1}{2}\Big])\simeq\HH^*(\OGr(n+1,2n+2),\ZZ\Big[\frac{1}{2}\Big]),
$$
so the result applies to the symmetric hermitian space $\Sp(n)/\UU(n)$ as well.
\end{rem}


\section{The Quadrics}

In this section we consider the case of even-dimensional quadrics
$Q_{2n-2}=\SO(2n)/\SO(2)\times\SO(2n-2)$. Note that for odd-dimensional quadrics
we have
$$
\HH^*(Q_{2n-1},\ZZ\Big[\frac{1}{2}\Big])=\HH^*(\Gr(1,2n),\ZZ\Big[\frac{1}{2}\Big]),
$$
the case already considered above.

It is well-known that $\HH^*(Q_{2n-2},\ZZ)$ has a presentation
$$
\ZZ[h,l]/(h^n-2hl,\,l^2)
$$
when $n$ is odd and
$$
\ZZ[h,l]/(h^n-2hl,\,l^2-h^{n-1}l)
$$
when $n$ is even, where $h$ is the class of a hyperplane section and
$l$ is the class of a maximal totally isotropic subspace.

Consider first the case of odd $n$. The function
$$
V(h,l)=-h^nl+hl^2+\frac{n}{2(2n-1)}h^{2n-1}
$$
satisfies the equations
\begin{align*}
&\partial_l V=-(h^n-2hl);\\
&\partial_h V=l^2+\frac{n}{2}h^{n-2}(h^n-2hl),
\end{align*}
so $V$ is a potential.

Similarly, in the case of even $n$ we set
$$
V(h,l)=-h^nl+hl^2+\frac{n-1}{2(2n-1)}h^{2n-1}.
$$
Then
\begin{align*}
&\partial_l V=-(h^n-2hl);\\
&\partial_h V=l^2-h^{n-1}l+\frac{n-1}{2}h^{n-2}(h^n-2hl),
\end{align*}
and $V$ is a potential.

\section{Cohomology of $E_6/\Spin(10)\times\SO(2)$, $E_7/E_6\times\SO(2)$ and $E_8/E_7\times\SO(2)$}

In \cite{DZ} (see also \cite{CM}) the following presentation was obtained:
$$
\HH^*(E_6/\Spin(10)\times\SO(2),\ZZ)=\ZZ[y_1,y_4]/(r_9,\,r_{12}),
$$
where
\begin{align*}
&r_9=2y_1^9+3y_1y_4^2-6y_1^5y_4;\\
&r_{12}=y_4^3-6y_1^4y_4^2+y_1^{12}.
\end{align*}

Setting
$$
V(y_1,y_4)=2y_1^9y_4+y_1y_4^3-3y_1^5y_4^2-\frac{5}{13}y_1^{13}
$$
we see that
\begin{align*}
&\partial_{y_4}V=r_9;\\
&\partial_{y_1}V=r_{12}-3y_1^3r_9,
\end{align*}
so $V$ is a potential.

Similarly, a presentation for $\HH^*(E_7/E_6\times\SO(2),\ZZ)$ looks as follows:
$$
\ZZ[y_1,y_5,y_9]/(r_{10},r_{14},r_{18}),
$$
where
\begin{align*}
&r_{10}=y_5^2-2y_1y_9;\\
&r_{14}=2y_5y_9-9y_1^4y_5^2+6y_1^9y_5-y_1^{14};\\
&r_{18}=y_9^2+10y_1^3y_5^3-9y_1^8y_5^2+2y_1^{13}y_5.
\end{align*}

Setting
$$
V(y_1,y_5,y_9)=-y_5^2y_9+y_1y_9^2+3y_1^4y_5^3-3y_1^9y_5^2+y_1^{14}y_5-\frac{2}{19}y_1^{19}
$$
gives
\begin{align*}
&\partial_{y_9}V=-r_{10};\\
&\partial_{y_5}V=-r_{14};\\
&\partial_{y_1}V=r_{18}+2y_1^3y_5r_{10}+2y_1^4r_{14},
\end{align*}
so $V$ is a potential.

Consider now $\HH^{*}(E_{8}/E_{7}\times\SO(2))$. The existence of the potential for this ring has been stated in \cite{LVW} as a problem.

From \cite{DZ} we have:

$$\HH^{*}(E_{8}/E_{7}\times\SO(2))= {\ZZ[y_{1},y_{6},y_{10},y_{15}]}/{(r_{15},r_{20},r_{24},r_{30})},$$
where
\begin{align*}
&r_{15} = 2y_{15} - 16y_{1}^5y_{10} - 10y_{1}^3y_{6}^2 + 10y_{1}^9y_{6} - y_{1}^{15}\\
&r_{20}= 3y_{10}^2 + 10y_{1}^2y_{6}^3 + 18y_{1}^4y_{6}y_{10} - 2y_{1}^5y_{15} - 8y_{1}^8y_{6}^2 + 4y_{1}^{10}y_{10}- y_{1}^{14}y_{6}\\
&r_{24}=5y_{6}^4 + 30y_{1}^2y_{6}^2y_{10} + 15y_{1}^4y_{10}^2 - 2y_{1}^9y_{15} - 5y_{1}^{12}y_{6}^2 + y_{1}^{14}y_{10}\\
&r_{30}= y_{15}^2 - 8y_{10}^3 + y_{6}^5 - 2y_{1}^3y_{6}^2y_{15} + 3y_{1}^4y_{6}y_{10}^2 - 8y_{1}^5y_{10}y_{15} +
 6y_{1}^9y_{6}y_{15} - 9y_{1}^{10}y_{10}^2 - \\
&y_{1}^{12}y_{6}^3 - 2y_{1}^{14}y_{6}y_{10} - 3y_{1}^{15}y_{15} +
 8y_{1}^{20}y_{10} + y_{1}^{24}y_{6} - y_{1}^{30}.
\end{align*}

It is clear that the grading on our algebra and the degrees of the relations prevent the existence of a potential whose gradient would give the above relations. But one can check that the following is true
\begin{prop}
The ring
$${{\Bbb Q}[y_{1},y_{6},y_{10},y_{15}]}/{(r_{15},r_{20},r_{24},r_{30})}$$
is a Jacobi ring with respect to the 'modified gradient' $\Big(y_1\frac{\partial}{\partial y_1},\frac{\partial}{\partial y_6},\frac{\partial}{\partial y_{10}},\frac{\partial}{\partial y_{15}}\Big)$.
The potential is
\begin{align*}
&V=y_{15}^2 - y_{15} y_{1}^{15} + 10 y_{15} y_{1}^9 y_{6} - 10 y_{15} y_{1}^3 y_{6}^2\\
&-16 y_{15} y_{1}^5 y_{10} + 8 y_{1}^20 y_{10} - 80 y_{1}^{14} y_{6} y_{10} + 80 y_{1}^8 y_{6}^2 y_{10}\\
&+64 y_{1}^{10} y_{10}^2+-5 y_{1}^{24} y_{6} + 30 y_{1}^{18} y_{6}^2 - 50 y_{1}^{12} y_{6}^3 + 25 y_{1}^6 y_{6}^4+\frac{y_{1}^{30}}{4}.
\end{align*}
\end{prop}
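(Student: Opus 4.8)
The plan is to verify the four stated identities as polynomial identities and then to deduce the equality of ideals from them. First I would make explicit the degree bookkeeping that forces the modification. With $\deg y_i = i$ the proposed $V$ is weighted-homogeneous of degree $30$, so $\partial_{y_{15}}V$, $\partial_{y_{10}}V$ and $\partial_{y_6}V$ have degrees $15$, $20$ and $24$, matching $r_{15}$, $r_{20}$ and $r_{24}$; but the ordinary partial $\partial_{y_1}V$ has degree $29$, and $\HH^*(E_8/E_7\times\SO(2))$ has no relation in that degree. This is precisely the obstruction stated just before the Proposition, and it is cured by using the Euler-type operator $y_1\partial_{y_1}$ in place of $\partial_{y_1}$, which lands in degree $30$, the degree of $r_{30}$. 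As a preliminary sanity check on the (long) expression for $V$ I would verify the Euler relation $30V = y_1\partial_{y_1}V + 6\,y_6\partial_{y_6}V + 10\,y_{10}\partial_{y_{10}}V + 15\,y_{15}\partial_{y_{15}}V$, which is automatic from homogeneity and quickly exposes typographical slips.

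The core is a finite, explicit computation. I would compute the four components of the modified gradient and write each as a $\QQ[y_1,y_6,y_{10},y_{15}]$-linear combination of $r_{15},r_{20},r_{24},r_{30}$ in the same triangular shape already used for the quadrics and for $E_6$, $E_7$: $\partial_{y_{15}}V = r_{15}$ outright; $\partial_{y_{10}}V$ equal to a nonzero scalar multiple of $r_{20}$ plus a polynomial multiple of $r_{15}$; $\partial_{y_6}V$ equal to a nonzero scalar multiple of $r_{24}$ plus polynomial multiples of $r_{20}$ and $r_{15}$; and $y_1\partial_{y_1}V$ equal to a nonzero scalar multiple of $r_{30}$ plus polynomial multiples of $r_{24}$, $r_{20}$ and $r_{15}$. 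Each of these is a degree-homogeneous identity in four variables; I would obtain the correcting coefficients by reducing $\partial_{y_{10}}V$, then $\partial_{y_6}V$, then $y_1\partial_{y_1}V$ successively modulo the relations already matched, and then confirm the resulting identities by hand or with a computer algebra system. Establishing this decomposition proves the inclusion $J\subseteq I$, where $J$ is the ideal generated by the modified gradient of $V$ and $I=(r_{15},r_{20},r_{24},r_{30})$.

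For the reverse inclusion $I\subseteq J$ I would invoke the triangular shape: the transition matrix $M\in\mathrm{Mat}_4(\QQ[y_1,y_6,y_{10},y_{15}])$ sending $(r_{15},r_{20},r_{24},r_{30})$ to $(\partial_{y_{15}}V,\partial_{y_{10}}V,\partial_{y_6}V,y_1\partial_{y_1}V)$ is lower triangular with nonzero scalars on the diagonal, hence $\det M\in\QQ^\times$ and $M$ is invertible over $\QQ[y_1,y_6,y_{10},y_{15}]$; therefore $J=I$, which is exactly the claim that $\QQ[y_1,y_6,y_{10},y_{15}]/(r_{15},r_{20},r_{24},r_{30})$ is the Jacobi ring of $V$ for the modified gradient. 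An alternative that avoids the triangularity bookkeeping: once $J\subseteq I$ is known, compute a Gröbner basis of $J$ and check that it has exactly $\tfrac{15\cdot 20\cdot 24\cdot 30}{1\cdot 6\cdot 10\cdot 15}=240$ standard monomials; since $\QQ[y]/I$ has dimension $240$ as well and surjects from $\QQ[y]/J$, the inclusion must be an equality.

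The step I expect to be the main obstacle is the second one. Unlike the Grassmannian and the orthogonal Grassmannian, where the potential is a single graded piece of a closed-form generating function (a logarithm, respectively an $\operatorname{arcsinh}$-type integral) and the Jacobi property follows structurally, here there is no such uniform source: $V$ has to be reverse-engineered from the four relations, and the triangular decomposition then verified essentially by brute force, keeping careful track of which monomials in $y_1$ (and, for the last two rows, in $y_6$ and $y_{10}$) occur as correcting coefficients. A secondary point worth isolating cleanly in the write-up — since $\HH^*(E_8/E_7\times\SO(2))$ is the case left open as a problem in \cite{LVW} — is the conceptual explanation of why the naive gradient cannot work and why $y_1\partial_{y_1}$ is the natural substitute.
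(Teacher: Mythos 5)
Your overall plan---write each component of the modified gradient as a triangular combination of $r_{15},r_{20},r_{24},r_{30}$ with nonzero scalars on the diagonal, invert the transition matrix to get equality of ideals, with a Gr\"obner/dimension count as a fallback---is exactly the kind of direct verification the paper has in mind: for this proposition the paper offers nothing beyond ``one can check'', and this is how it treats the quadrics, $E_6$ and $E_7$. Your degree bookkeeping (why $\partial_{y_1}$ must be replaced by $y_1\partial_{y_1}$) and the count $\tfrac{15\cdot20\cdot24\cdot30}{1\cdot6\cdot10\cdot15}=240$ are also correct.

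The gap is that the computation you defer to ``by hand or with a computer algebra system'' does not come out the way you assert for the potential as printed, so the decomposition you build the whole argument on does not exist. Concretely: $V$ contains no $y_{10}^3$ monomial, so $\partial_{y_{10}}V$ contains no $y_{10}^2$; but $r_{20}$ has the term $3y_{10}^2$, and the only weight-$5$ monomial is $y_1^5$, whose product with $r_{15}$ also contains no $y_{10}^2$. Hence in any identity $\partial_{y_{10}}V=\alpha\,r_{20}+\beta\,y_1^5r_{15}$ one is forced to $\alpha=0$, contradicting the nonzero diagonal entry; the same happens for $\partial_{y_6}V$ (no $y_6^4$ monomial). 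In fact the displayed $V$ is term-by-term equal to $\tfrac14 r_{15}^2$, so $\partial_{y_{15}}V=r_{15}$, $\partial_{y_{10}}V=-8y_1^5\,r_{15}$, $\partial_{y_6}V=5y_1^3(y_1^6-2y_6)\,r_{15}$ and $y_1\partial_{y_1}V=\tfrac12 r_{15}\,y_1\partial_{y_1}r_{15}$: the modified Jacobi ideal is just the principal ideal $(r_{15})$, whose quotient has Krull dimension $3$, and it cannot equal the height-$4$ ideal $(r_{15},r_{20},r_{24},r_{30})$; your fallback check would find an infinite-dimensional quotient, not $240$. Note also that your Euler-relation sanity check is passed by $\tfrac14 r_{15}^2$ (it is homogeneous of weight $30$), so it does not detect the problem. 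To carry your plan through you must first repair the potential: the $y_{15}$-dependent part is indeed the completion of the square forced by $\partial_{y_{15}}V=r_{15}$, but further terms in $y_1,y_6,y_{10}$ (absent from the printed formula) are required before the triangular verification, or the $240$-dimensional count, can possibly succeed.
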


\begin{rem}
One can check a similar statement for all \emph{extra-special} homogeneous
spaces, that is corresponding to a parabolic subgroup whose unipotent radical
has a one-dimensional commutator subgroup.
\end{rem}

\section{Quantum cohomology and Vafa--Intriligator formula}

Let $X=G/H$ be a hermitian symmetric space. The main result of \cite{CM} says
that the quantum cohomology of $X$ are given by the following deformation of the
ordinary cohomology:
$$
\QH^*(X,\QQ)=\QQ[q,I_1,\ldots,I_n]/(R_1,\ldots,R_{n-1},R_n+q).
$$
If $R_j=I_i$, one can drop $I_i$ and $R_j$ from both lists, and in this way
all presentations considered above are obtained. It follows that $\QH^*(X)$ is
also a Jacobi ring with the potential $V^{[q]}=V+qh$, $V$ being the potential of
$\HH^*(X)$.

Let's look at the solution set of the system of equations
\begin{align*}
&R_1=0\\
&\ldots\\
&R_{m-1}=0\\
&R_m=-q.
\end{align*}
This system describes cyclic elements in the Cartan subalgebra of the Lie algebra
of $G$, and by \cite{K} the Weyl group $W_G$ acts on the set of solutions simply
transitively. This means that the quantum deformation appears to be a particular
case of the equivariant deformation for some choice of $(y_1,\ldots,y_n)$.

Let us describe this choice explicitly for classical groups. In the case of $G=\UU(n)$ one
can choose
$$
R_i=\frac{1}{i}\sum_{j=1}^n t_j^i,
$$
and we can take
$$
(y_1,\ldots,y_n)=\Big(\sqrt[n]{-q},\zeta_n\sqrt[n]{-q},\ldots,\zeta_n^{n-1}\sqrt[n]{-q}\Big),
$$
where $\zeta_n$ is $n$-th root of $1$. The Weyl group $W(A_{n-1})=S_n$ acts
by permutations.

In the case of $G=\SO(2n+1)$ or $\Sp(2n)$ we have
$$
R_i=\frac{1}{2i}\sum_{j=1}^n t_j^{2i},
$$
and we can take the same $(y_1,\ldots,y_n)$ as in the case $\UU(n)$.
The Weyl group $W(B_n)=W(C_n)$ acts by permutations and changes
of signs at any positions.

In the case of $G=\SO(2n)$ we have
\begin{align*}
&R_i=\frac{1}{2i}\sum_{j=1}^n t_j^{2i},\,i<n;\\
&R_n=\prod_{j=1}^n t_j,
\end{align*}
and we can take $y_n=0$ and $(y_1,\ldots,y_{n-1})$ as in the case of $\UU(n-1)$.
The Weyl group $|W(D_n)|$ acts by permutations and changes of signs
at even number of positions.

As above, we denote the image of $Q$ under the map $(I_1,\ldots,I_n)$ by $P$;
it consists of $|W_G/W_H|$ points.

Using \cite[Theorem~4.5]{SG} we obtain the following formula for the Gromov--Witten
invariants which is due to Vafa and Intriligator \cite{Va1}, \cite{Va2} and \cite{In}:

$$
\langle f\rangle_g=\sum_{p\in P}\ch_{\dim X}(T_X)^{g-1}(p)f(p),
$$
where $f$ is a polynomial in $I_1,\ldots,I_n$. In particular, we recover a
formula in \cite{Ch} for the case of Grassmannians.






\section{Equivariant cohomology and Bott denominators}

Let $X=G/H$ be any projective homogeneous variety. The rational cohomology of
$X$ has the Borel presentation
$$
\HH^*(X,\QQ)=\QQ[I_1,\ldots,I_n]/(R_1,\ldots,R_n),
$$
where $I_i$ are the fundamental invariants of $W_H$ in the space of characters
of a maximal torus $T$ of $W_G$, and $R_j$ are the fundamental invariants of
$G$ on the same space considered as functions in $I_i$.

The $T$-equivariant cohomology of $X$ considered as an algebra over
$\HH^*(BT)=\ZZ[y_1,\ldots,y_n]$ are produced by the following deformation
$$
\HH^*_T(X,Q)=\QQ[y_1,\ldots,y_n,I_1,\ldots,I_n]/(R_1-R_1(y_1,\ldots,y_n),\ldots,R_n-R_n(y_1,\ldots,y_n))
$$

Specialize $(y_1,\ldots,y_n)$ to any point ``in general position''
(more precisely, lying in the interior of some Weyl chamber). Then the common
zeroes of all relations considered as functions in weights $t_1,\ldots,t_n$
of $T$ form $W_G$-orbit of $(y_1,\ldots,y_n)$ that will be denoted by $Q$. Note
that $I_i$ take constant values on $W_H$-orbits in $Q$, therefore
the image $P$ of $Q$ under $(I_1,\ldots,I_n)$ consists of $W_G/W_H$ elements.

By \cite[Ch.~IV,\S~5,n.~4]{Bu} the Jacobian $\det(\frac{\partial R_i}{\partial t_j})$
coincides with the product of positive roots of $G$ (considered as linear functions in
$t_i$) up to a constant. Similarly, $\det(\frac{\partial I_i}{\partial t_j})$ is
the product of positive roots of $H$ up to a constant. It follows that
$$
\det\Big(\frac{\partial R_i}{\partial I_j}\Big)=c\cdot\ch_{\dim X}(T_X),
$$
the top Chern class of the tangent bundle of $X$, up to a constant $c$.

The general theory of Artin rings as in \cite{VGZ} implies that the intersection
form on $\HH^*_T(X)$ looks as follows:
\begin{equation}\label{*}\tag{*}
\langle f\rangle=C\cdot\sum_{p\in P}\frac{f(p)}{\ch_{\dim X}(T_X)(p)},
\end{equation}
where $f$ is a polynomial in $I_i$ and $C$ is some constant. Substituting
$f=\ch_{\dim X}(T_X)$ we get the Euler characteristic of $X$ on the left-hand
side and $C\cdot|W_G/W_H|$ on the right-hand side, so actually $C=1$.

\begin{rem}
A similar formula was obtained by Akyildiz and Carell in \cite{AC}, and for
extraordinary cohomology by Bressler and Evens in \cite[Theorem~1.8]{BE}.
There is a more general formula for the Gysin
map $\HH^*(G/H)\to\HH^*(G/K)$ involving summation over $W_K/W_H$; details are
to appear in a joint paper of the second author and Baptiste Calm\'es.
\end{rem}

For example, consider the case of Grassmannian $X=\Gr(k,n)$. If $f$ is a
polynomial in $c_1,\ldots,c_k$ considered as a symmetric function in
$t_1,\ldots,t_k$, we have
$$
\langle f\rangle=\sum_{\{i_1,\ldots,i_k\}\in\Lambda^k\{1,\ldots,n\}}
\frac{f(y_{i_1},\ldots,y_{i_k})}{\prod_{i\in\{i_1,\ldots,i_k\},
j\in\{1,\ldots,n\}\setminus\{i_1,\ldots,i_k\}}(y_i-y_j)}.
$$
In particular, this expression is always a polynomial in $y_i$, which is
constant if $\deg f=k(n-k)$ and $0$ if $\deg f<k(n-k)$.

\begin{rem}
The following observation is due to Fedor Petrov. Applying the Combinatorial
Nullstellensatz \cite[Theorem 4]{KP} to $g=\frac{1}{k!}f\cdot\prod_{1\le i<j\le k}(t_i-t_j)^2$
and $|A_i|=\{y_1,\ldots,y_n\}$, we see that $\langle f\rangle$ is equal up to
sign to the coefficient of $g$ at the monomial $t_1^{n-1}\ldots t_k^{n-1}$,
if $\deg f\le k(n-k)$.
\end{rem}

Another example is the maximal orthogonal Grassmannian $\OGr(n,2n)$. If $f$
is a polynomial in $x_1,\ldots,x_n$ considered as a symmetric function in
$t_1,\ldots,t_n$, we have
$$
\langle f\rangle=\sum_{\eps_i=\pm 1,\prod_i\eps_i=1}
\frac{f(\eps_1y_1,\ldots,\eps_ny_n)}{\prod_{1\le i<j\le n}(\eps_iy_i+\eps_jy_j)}.
$$

\begin{rem} The construction of the previous section allows to describe the equivariant cohomology of the hermitian homogeneous spaces as an affine scheme in the spirit of work \cite{GMcPh}.

We consider the case of the Grassmann manifolds as an illustration.
The compact maximal torus $T=\SO(2)^n$ inside $\UU(n)$ acts naturally on $\Gr(k,n)$.
The equivariant cohomology ring $\HH^*_T(\Gr(k,n))$ is an algebra over the
cohomology of the classifying space of $T$. The latter is isomorphic to
$\ZZ[y_1,\ldots,y_n]$, $\deg y_i=2$, where $y_i$ are the first Chern classes of the
appropriate line bundles over $T$. The relations in the cohomology ring come
from the relation $C\bar C=\prod_{i=1}^n(1+y_i)$. First few relations look like this:
\begin{itemize}
\item $c_1+\bar c_1-e_1({\bf y})$
\item $c_2+c_1\bar c_1 +\bar c_2-e_2({\bf y})$
\item $c_3+c_2\bar c_1 + c_1\bar c_2 + \bar c_3-e_3({\bf y})$
\item $c_4+c_3\bar c_1 + c_2\bar c_2 + c_1\bar c_3 + \bar c_4-e_4({\bf y})$
\item $c_5+c_4\bar c_1+c_3\bar c_2 + c_2\bar c_3 + c_1\bar c_4 + \bar c_5-e_5({\bf y})$
\item $\ldots$
\end{itemize}
Here $e_j({\bf y})$ is the $j$-th elementary symmetric functions in $y_i$.

This allows to view $\bar c_i$ as functions in $c_j$ and $y_j$.
Then
$$\HH^*_T(\Gr(k,n),\ZZ) = \ZZ[c_1,\ldots,c_k]/\bar c_{n-k+1}=\ldots=\bar c_n=0$$
Describing  $\Spec\HH^*_T(Gr(k,n))$ amounts to solving the equations $\bar c_{n-k+1}=\ldots=\bar c_n=0$ in $c_1,\ldots,c_k$ with respect to
$y_1,\ldots,y_n$.
This can be done by a trick similar to the one in the section above.
Take any $k$-element subset of the set $\{y_1,\ldots,y_n\}$, say $\{y_{i_1},\ldots,y_{i_k}\}$. Then the appropriate solution is given
as follows:
\begin{align*}
&c_1=e_1(y_{i_1},\ldots,y_{i_k})\\
&c_2=e_2(y_{i_1},\ldots,y_{i_k})\\
&\ldots\\
&c_k=e_k(y_{i_1},\ldots,y_{i_k})
\end{align*}
\end{rem}

\section{Algorithm for computing Littlewood--Richardson coefficients}

Let $X$ be as in the previous section. There is an important additive basis
of $\HH^*(X)$ consisting of the classes of Schubert varieties $Z_w$ parametrized
by $w\in W_G/W_H$. The coefficients of the multiplication table in this basis
are known as \emph{(generalized) Littlewood--Richardson coefficients}. Note that
the coefficient of the product $[Z_u]$ and $[Z_v]$ at $[Z_w]$ equals
$$
\langle [Z_u][Z_v][Z_{w^\vee}]\rangle,
$$
where $Z_{w^\vee}$ is the dual Schubert variety to $Z_w$. So the
formula~\eqref{*} gives rise to a fast computational tool once one knows how to
express $Z_u$ in terms of multiplicative generators $I_i$. In the classical
case of Grassmannians formulas are given in \cite{FK}; in general we proceed
using the divided difference operators invented by Demazure \cite{D}.

Namely, it is not hard to see that for any point $x$ in general position and
any $W_H$-invariant function $f$ in weights of degree $k$, the element in
$\HH^*(X)$ represented by $f$ has a form
$$
\sum_{l(w)=k}\sum_{u\le w}C_{w,u,x}f(ux)[Z_w],
$$
where $\le$ stands for the Bruhat order and $C_{w,u,v}$ are constants
not depending on $f$. The coefficients $C_{w,u,x}$ can be computed recursively
based on the identities
\begin{align*}
&C_{1,1,x}=1;\\
&C_{s_\alpha w',u,x}=\frac{C_{w',u,x}-C_{w',s_\alpha u,s_\alpha x}}{\alpha(x)},
\end{align*}
where $\alpha$ is any simple root and $s_\alpha$ is the reflection corresponding
to $\alpha$.

\end{document}